\documentclass[12pt]{article}
\usepackage[a4paper,margin=0.82in]{geometry}
\usepackage[T1]{fontenc}
\usepackage{lmodern}
\usepackage{amsmath,amssymb,amsthm,mathtools}
\usepackage{microtype}
\usepackage[hidelinks]{hyperref}

\hypersetup{
  pdftitle={Convex-Hull Instability of the gamma2-Functional in Lp},
  pdfauthor={Helen W. J. Zhang and Chengdong Zhao},
  pdfsubject={Generic chaining, convexification, Lp spaces, and cotype},
  pdfkeywords={generic chaining, gamma2-functional, convex hulls, Lp spaces, cotype}
}

\newtheorem{theorem}{Theorem}[section]
\newtheorem{lemma}[theorem]{Lemma}
\newtheorem{proposition}[theorem]{Proposition}
\newtheorem{corollary}[theorem]{Corollary}

\newcommand{\conv}{\operatorname{conv}}
\newcommand{\diam}{\operatorname{diam}}
\newcommand{\Cprof}{\mathfrak C}

\title{Convex-Hull Instability of the $\gamma_2$-Functional in $L^p$}
\author{Helen W. J. Zhang\\
\small School of Mathematics, Hunan University\\
\small Changsha, Hunan 410082, China\\
\small \texttt{helenzhang@hnu.edu.cn}
\and
Chengdong Zhao\\
\small School of Mathematics and Statistics, Central South University\\
\small Changsha, Hunan 410083, China\\
\small \texttt{cdzhao@csu.edu.cn}}
\date{}

\begin{document}
\maketitle
\vspace{-1.35em}

\begin{abstract}
For every $2<p<\infty$ and every integer $r\ge2$, we construct a finite
set $T\subseteq S_{L^p[0,1]}$ such that $|T|\le2^{Cr^2}$,
$\gamma_2(T)\le Cr$, and
$\gamma_2(\conv T)\ge c r^{3/2-1/p}$.  Consequently, for every
fixed $p>2$, both estimates in Talagrand's Research Problem~2.11.3 fail
in each of the spaces $L^p[0,1]$ and $\ell_p$.  The lower bound follows
from a multilevel product principle applied to a scale-separated product
of Euclidean simplices.  The same construction gives the sharp
convexification profile of $\ell_p^r(\ell_2^{2^{128r}})$ for
$2<p\le\infty$ and, by finite representability, quantitative failure in
every infinite-dimensional Banach space with cotype index greater than
$2$.
\end{abstract}

\noindent\textbf{2020 Mathematics Subject Classification.}
Primary 46B20; Secondary 46B07, 46B09, 46E30, 60G15.

\noindent\textbf{Keywords.}
Generic chaining, $\gamma_2$-functional, convex hulls, $L^p$ spaces,
cotype, finite representability.

\section{Introduction and main results}\label{sec:intro}

All Banach spaces are real.  For a positive integer $D$, write
$[D]=\{1,\ldots,D\}$.  We write $S_X$ and $B_X$ for the unit sphere and
unit ball of $X$, use the convention $1/\infty=0$, and let $\log$ and
$\log_2$ denote the natural and base-two logarithms.  The notation
$a\lesssim b$ means $a\le Cb$ for an absolute constant $C$; a subscript
indicates permitted dependence, and $a\asymp b$ means that both
inequalities hold.

We use Talagrand's partition normalization of $\gamma_2$.  An admissible
sequence on a metric space $(K,d)$ is a refining sequence of partitions
$(\mathcal A_n)_{n\ge0}$ such that $|\mathcal A_0|=1$ and
$|\mathcal A_n|\le2^{2^n}$ for $n\ge1$.  Thus
\[
 \gamma_2(K,d)=\inf_{(\mathcal A_n)}\sup_{x\in K}
 \sum_{n\ge0}2^{n/2}\diam\bigl(\mathcal A_n(x)\bigr).
\]
This is Talagrand's normalization
\cite[Definitions~2.7.1 and~2.7.3]{Talagrand2021}; see
\cite{Dudley1967,Fernique1975,Talagrand1996,Talagrand2021} for
background.

Talagrand's Theorem~2.11.1 gives
$\gamma_2(\conv T;H)\le L\gamma_2(T;H)$ for every finite subset $T$ of a
Hilbert space $H$, where $L$ is universal.  Research Problem~2.11.3 asks
whether every fixed $2$-smooth Banach space $X$ admits a constant $K_X$
such that $\gamma_2(\conv T;X)\le K_X\sqrt{\log|T|}$ for every finite
$T\subseteq B_X$, and, more generally, whether
$\gamma_2(\conv T;X)\le K_X\gamma_2(T;X)$ for every finite $T\subseteq X$
\cite[Theorem~2.11.1 and Research Problem~2.11.3]{Talagrand2021}.
For $N\ge2$, set
\[
 \Cprof_X(N)=
 \sup_{\substack{T\subset X\ \mathrm{finite}\\2\le |T|\le N}}
 \frac{\gamma_2(\conv T;X)}{\gamma_2(T;X)},
 \qquad
 \Cprof_X=\sup_{N\ge2}\Cprof_X(N).
\]
The denominator is positive because $\gamma_2(T)\ge\diam(T)$ whenever
$|T|\ge2$.

For $2<p\le\infty$ and an integer $r\ge2$, put $D=2^{128r}$ and
$Y_{p,r}=\ell_p^r(\ell_2^D)$.

\begin{theorem}\label{thm:main}
There are absolute constants $c,C>0$ such that, for every
$2<p\le\infty$ and every integer $r\ge2$, the unit sphere of $Y_{p,r}$
contains the explicit set
\[
 T_{p,r}=
 \left\{r^{-1/p}\sum_{i=1}^r e_{i,s_i}:1\le s_i\le D\right\},
\]
where $e_{i,s}$ is the $s$th standard unit vector in the $i$th copy of
$\ell_2^D$.  It satisfies
\[
 |T_{p,r}|=2^{128r^2},\qquad
 \gamma_2(T_{p,r};Y_{p,r})\le Cr,
\]
and
\[
 \gamma_2(\conv T_{p,r};Y_{p,r})\ge c\,r^{3/2-1/p}.
\]
\end{theorem}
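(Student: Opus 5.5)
Membership in the sphere and the count are immediate. Each point has $\ell_2$-norm $r^{-1/p}$ in every block, so $\|t\|_{Y_{p,r}}=(r\cdot r^{-1})^{1/p}=1$ (and $r^{-1/p}\le 1$ for $p=\infty$, with equality). Also $|T_{p,r}|=D^r=2^{128r^2}$.

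\textbf{Upper bound.} For two points $s,s'$ with Hamming distance $k$, the distance is $r^{-1/p}(k\cdot 2^{p/2})^{1/p}=\sqrt2\,(k/r)^{1/p}\le\sqrt2$. I will build the admissible sequence block by block. At level $n\approx\log_2(128 j r)$, i.e.\ when $2^{2^n}\ge D^{j}$, I reveal the coordinates $s_1,\dots,s_j$; inside a cell of this stage the diameter is at most $\sqrt2\,((r-j)/r)^{1/p}\le\sqrt2$. Summing $2^{n/2}$ over the roughly $\log_2 r+O(1)$ levels between revealing the first block and revealing all blocks gives a geometric sum of order $2^{n_{\max}/2}\approx\sqrt{128r^2}\asymp r$. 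Levels before $s_1$ is revealed contribute $\sum_{2^n\lesssim r}2^{n/2}\cdot\sqrt2\lesssim\sqrt r$. Hence $\gamma_2(T)\le Cr$. Alternatively, $\gamma_2(T)\lesssim \diam(T)\sqrt{\log|T|}\asymp r$ already holds for any finite set, which is simpler, and I would use that.

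\textbf{Lower bound.} This is the main obstacle. I would invoke the multilevel product principle announced in the abstract, presumably stated earlier. The hull $\conv T_{p,r}$ contains the product $\prod_i r^{-1/p}\Delta_i$, where $\Delta_i=\conv\{e_{i,s}\}$ is the simplex in the $i$th copy of $\ell_2^D$. Indeed, the convex hull of a product of vertex sets equals the product of the hulls. In Hilbert space the simplex satisfies $\gamma_2(\Delta)\asymp\sqrt{\log D}\asymp\sqrt r$ via Sudakov-type separation, since it contains many well-separated points at every scale.

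The plan is to place the $r$ factors at separated scales. At scale $\epsilon_j\asymp (j/r)^{1/p}$-type distances one must resolve $j$ coordinates simultaneously. I will use a Sudakov/Fernique-type lower bound (e.g.\ the growth condition, Talagrand's Theorem 2.7.2 style functionals $F$) on the product. On the product, choosing sub-simplices with separation $\delta$ in $k$ blocks gives $\ell_p$ separation $r^{-1/p}k^{1/p}\delta$ with cardinality $\exp(k\,\delta^{-2})$. Balancing this and summing over dyadic scales, each of the roughly $\log r$ scales must contribute. Optimizing $k$ at each scale yields a per-scale contribution like $r^{-1/p}k^{1/p}\delta\sqrt{k/\delta^2}=r^{-1/p}k^{1/2+1/p}$. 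Taking $k=r$ gives $r^{1/2}$, which is not enough, so the gain must come from the multilevel structure: scales are nested, and distinct blocks are used at distinct levels, giving $r$ contributions of size roughly $r^{1/2-1/p}$. The product principle would assert that $\gamma_2$ of a scale-separated product is at least a constant times the sum of the factors' contributions. With $r$ factors each contributing about $r^{1/2-1/p}$, the total is $r^{3/2-1/p}$; the choice $D=2^{128r}$ ensures that each simplex has enough room ($\log D\gtrsim r$) for the scale separation. I would first try to verify the per-factor contribution by restricting to the sub-simplex family at scale $\delta_i=2^{-i}$-type levels, and then apply the product principle to add them up.
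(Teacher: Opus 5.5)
Your verification of $T_{p,r}\subseteq S_{Y_{p,r}}$, of $|T_{p,r}|=2^{128r^2}$, and of $\gamma_2(T_{p,r})\lesssim r$ via $\gamma_2(A)\lesssim\diam(A)\sqrt{\log(1+|A|)}$ matches the paper. So does your architecture for the lower bound: $\conv T_{p,r}=r^{-1/p}(\Delta_D)^r$, with each block made to pay about $r^{1/2-1/p}$ at its own admissible level. The gap is that the lower bound, which is the real content of the theorem, is never proved.

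You defer it to a principle that you neither state precisely nor prove. In the form you guess (``$\gamma_2$ of a scale-separated product is at least a constant times the sum of the factors' contributions'') it cannot be applied, and without a quantitative hypothesis it is false: $\{0,1\}^r\subset\ell_p^r$ has $\gamma_2\lesssim r^{1/2+1/p}\ll r$ for $p>2$, although each factor has $\gamma_2=1$. What you need is Proposition~\ref{prop:product}. It says: if $K\supseteq S_1\times\cdots\times S_m$ with nonexpansive coordinate projections, each $S_j$ is $\varepsilon_j$-separated, and $n_1,\ldots,n_m$ are distinct with $\sum_j2^{2^{n_j}}/|S_j|<1$, then $\gamma_2(K)\ge\sum_j2^{n_j/2}\varepsilon_j$. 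Its proof is a union bound. A cell of $\mathcal A_{n_j}$ with diameter $<\varepsilon_j$ pins the $j$th coordinate, so at most $2^{2^{n_j}}|S|/|S_j|$ points of $S$ have a small level-$n_j$ cell. Summability then leaves a single point that pays $2^{n_j/2}\varepsilon_j$ at every level $n_j$ simultaneously. The quantitative point missing from your sketch is that $|S_j|$ must beat $2^{2^{n_j}}$ summably, not merely match it as in a Sudakov-type bound.

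You also never produce sets satisfying these hypotheses. The paper takes $k_i=2^{8i}$, a maximal family $\mathcal F_i$ of $k_i$-subsets of $[D]$ with pairwise $|A\triangle B|\ge k_i$, and $S_i=\{r^{-1/p}k_i^{-1}\mathbf 1_A:A\in\mathcal F_i\}$ in block $i$. Then:
\begin{itemize}
\item $S_i$ is $\varepsilon_i$-separated with $\varepsilon_i=r^{-1/p}k_i^{-1/2}$;
\item the constant-weight packing--covering count gives $|\mathcal F_i|\ge(D/(32e\,k_i))^{k_i/2}$, hence $\log_2|S_i|\ge50rk_i$;
\item $n_i=\lfloor\log_2(4rk_i)\rfloor$ gives distinct levels with $2^{2^{n_i}}/|S_i|\le2^{-46rk_i}$ and $2^{n_i/2}\varepsilon_i>\sqrt2\,r^{1/2-1/p}$, which sum to $\sqrt2\,r^{3/2-1/p}$.
\end{itemize}

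Your heuristic shows why this step cannot be left implicit. Your entropy count $\exp(k\delta^{-2})$ omits the factor $\log(D\delta^2)\asymp r$ supplied by $D=2^{128r}$. With your count, a block at separation $r^{-1/p}\delta$ is worth only $r^{-1/p}$ at its level, and the multilevel total is $r^{1-1/p}$. The figure $r^{1/2-1/p}$ per block that you finally state is correct only once that factor is restored, so in your write-up it is asserted rather than derived.
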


Since $\sqrt{\log|T_{p,r}|}\asymp r$, both ratios in Talagrand's problem
are bounded below by a constant multiple of $r^{1/2-1/p}$.  For
$2<p<\infty$, each $Y_{p,r}$ embeds isometrically into $L^p[0,1]$ and is
finitely representable in $\ell_p$; Lemma~\ref{lem:transfer} normalizes
the latter embeddings without changing the order of either ratio.  As
$L^p[0,1]$ and $\ell_p$ are $2$-smooth, both estimates fail in each fixed
space.  Details are given in Section~\ref{sec:consequences}.

For the finite model, comparison with $H=\ell_2^r(\ell_2^D)$ gives
$\|x\|_{Y_{p,r}}\le\|x\|_H\le r^{1/2-1/p}\|x\|_{Y_{p,r}}$.
The Hilbert convex-hull inequality and Theorem~\ref{thm:main} therefore
yield $\Cprof_{Y_{p,r}}\asymp r^{1/2-1/p}$, including $p=\infty$.

Let $q_X=\inf\{q\ge2:X\text{ has Rademacher cotype }q\}$, with
$q_X=\infty$ when $X$ has no finite cotype.

\begin{corollary}\label{cor:cotype}
Let $X$ be infinite dimensional and suppose that $q_X>2$.  For every
sufficiently large $N$, there is a finite set $T\subseteq B_X$, with
$2\le|T|\le N$, such that
\[
 \min\left\{
 \frac{\gamma_2(\conv T;X)}{\sqrt{\log|T|}},
 \frac{\gamma_2(\conv T;X)}{\gamma_2(T;X)}
 \right\}
 \gtrsim (\log N)^{1/4-1/(2q_X)}.
\]
Consequently, neither of the two estimates stated above holds in $X$,
and $\Cprof_X=\infty$.
\end{corollary}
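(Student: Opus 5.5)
We deduce Corollary~\ref{cor:cotype} from Theorem~\ref{thm:main} by finite representability, using the Maurey--Pisier theorem. Since $X$ is infinite dimensional, $q_X>2$ means that $\ell_{q_X}$ is finitely representable in $X$; if $q_X=\infty$ we use $\ell_\infty$. Fix $p$ with $2<p\le\infty$ and $p\le q_X$ (taking $p=q_X$ when allowed). Then $\ell_p^m$ is $(1+\varepsilon)$-isomorphic to a subspace of $X$ for every $m$ and every $\varepsilon>0$.

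The space $Y_{p,r}=\ell_p^r(\ell_2^D)$ is finite dimensional. By Dvoretzky's theorem, each $\ell_2^D$ embeds $(1+\varepsilon)$-isomorphically into some $\ell_p^{m}$. Taking the $\ell_p$-sum of these embeddings gives a $2$-isomorphic copy of $Y_{p,r}$ inside some $\ell_p^{M}$, and hence a $4$-isomorphic copy inside $X$. When $p=\infty$ we may instead use an isometric embedding into $\ell_\infty^M$.

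Let $J\colon Y_{p,r}\to X$ be such an embedding, normalized so that $\|J\|\le1$ and $\|J^{-1}\|\le4$. Set $T=J(T_{p,r})\subseteq B_X$. Since $J$ is linear, $\conv T=J(\conv T_{p,r})$. The functional $\gamma_2$ changes by at most the distortion factor under bi-Lipschitz maps, because diameters of partition cells scale by at most the Lipschitz constant. Theorem~\ref{thm:main} therefore gives
\[
\gamma_2(T;X)\le Cr,\qquad \gamma_2(\conv T;X)\ge \tfrac{c}{4}\,r^{3/2-1/p},\qquad |T|=2^{128r^2}.
\]
These bounds give $\sqrt{\log|T|}\asymp r$, so both ratios are at least of order $r^{1/2-1/p}$. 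This is presumably the content of Lemma~\ref{lem:transfer}; alternatively, we can just use the bi-Lipschitz comparison directly.

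Given a large $N$, choose the largest $r\ge2$ with $2^{128r^2}\le N$. Then $r\asymp\sqrt{\log N}$, so
\[
r^{1/2-1/p}\asymp(\log N)^{1/4-1/(2p)}.
\]
The main obstacle is the endpoint. The Maurey--Pisier theorem gives finite representability of $\ell_{q_X}$ itself, which allows $p=q_X$ and hence the exact exponent $1/4-1/(2q_X)$. With only $p<q_X$ we would lose an arbitrarily small amount in the exponent. Two cases need checking:
\begin{itemize}
\item when $q_X<\infty$, finite representability of $\ell_{q_X}$ is exactly what we need;
\item when $q_X=\infty$, we use $\ell_\infty$, and the exponent becomes $1/4$, matching the convention $1/\infty=0$.
\end{itemize}
The isomorphism constant of the embedding is uniform in $r$, so the implied constant in $\gtrsim$ is absolute.

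Finally, since the right-hand side tends to infinity with $N$, neither of the estimates $\gamma_2(\conv T;X)\le K_X\sqrt{\log|T|}$ or $\gamma_2(\conv T;X)\le K_X\gamma_2(T;X)$ can hold, and $\Cprof_X=\infty$.
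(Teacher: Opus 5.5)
Your argument is correct and matches the paper's proof. In both, Maurey--Pisier (or, when $q_X=\infty$, finite representability of $c_0$) puts $\ell_{q_X}^M$ into $X$, the model space $Y_{p,r}$ is placed in $\ell_p^M$ with bounded distortion, and the bi-Lipschitz transfer (Lemma~\ref{lem:transfer}) with $r\asymp\sqrt{\log N}$ gives the bound. The only variation is that you get $Y_{p,r}$ into $\ell_p^M$ via Dvoretzky's theorem and an $\ell_p$-sum, whereas the paper uses the Gaussian isometry $\Phi$ into $L^p[0,1]$ followed by discretization; both work.

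One small correction: for $p=\infty$ there is no isometric embedding of $Y_{\infty,r}$ into a finite-dimensional $\ell_\infty^M$, since subspaces of $\ell_\infty^M$ have polyhedral unit balls. A $(1+\varepsilon)$-embedding via a finite norming set suffices, as in the paper, and so does your own Dvoretzky argument, so nothing is lost.
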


The proof combines a standard constant-weight packing estimate with a
multilevel product principle that forces one point to pay at several prescribed
admissible levels.  Related entropy and interpolation results for convex hulls
appear in \cite{Carl1997,CarlKyreziPajor1999,LiLinde2000,vanHandel2018}.

\section{A multilevel product principle}\label{sec:product}

\begin{proposition}\label{prop:product}
Let $(K,d)$ be a metric space.  For $1\le j\le m$, let $(S_j,d_j)$ be a
nonempty finite $\varepsilon_j$-separated metric space, where
$\varepsilon_j>0$.  Suppose that $K$ contains the full Cartesian product
$S=S_1\times\cdots\times S_m$ and that
$d_j(\pi_jx,\pi_jy)\le d(x,y)$ for all $x,y\in S$ and $1\le j\le m$.
If $n_1,\ldots,n_m$ are distinct nonnegative integers such that
\[
 \sum_{j=1}^m\frac{2^{2^{n_j}}}{|S_j|}<1,
\]
then
\[
 \gamma_2(K,d)\ge\sum_{j=1}^m2^{n_j/2}\varepsilon_j.
\]
\end{proposition}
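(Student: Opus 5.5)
Fix an arbitrary admissible sequence $(\mathcal A_n)_{n\ge0}$ on $(K,d)$. Restricting it to $S\subseteq K$ gives an admissible sequence on $S$: each $\mathcal A_n\cap S$ is still a refining partition with at most $2^{2^n}$ cells, and diameters in $S$ do not exceed those in $K$. It therefore suffices to find a single point $x\in S$ with
\[
 \diam\bigl(\mathcal A_{n_j}(x)\bigr)\ge\varepsilon_j\qquad\text{for every }1\le j\le m .
\]
Granted this, the sum defining $\gamma_2$ at $x$ contains the distinct terms $2^{n_j/2}\diam(\mathcal A_{n_j}(x))$. Because the $n_j$ are distinct, these terms do not overlap, and so
\[
 \sup_{y}\sum_n2^{n/2}\diam(\mathcal A_n(y))\ge\sum_j2^{n_j/2}\varepsilon_j .
\]
Taking the infimum over admissible sequences then gives the claim.

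To find $x$, I will use a union bound under the uniform probability measure on $S=S_1\times\cdots\times S_m$. Call $x\in S$ \emph{bad at level $j$} if $\diam(\mathcal A_{n_j}(x)\cap S)<\varepsilon_j$. Fix a cell $A\in\mathcal A_{n_j}$ with $\diam(A\cap S)<\varepsilon_j$. For any two points $y,z\in A\cap S$ we have $d_j(\pi_jy,\pi_jz)\le d(y,z)<\varepsilon_j$. Since $S_j$ is $\varepsilon_j$-separated, this forces $\pi_jy=\pi_jz$. Thus the $j$th coordinate is constant on $A\cap S$, so $A\cap S$ lies in a fiber $\{x:\pi_jx=s\}$. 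Such a fiber has measure $1/|S_j|$.

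The set of points bad at level $j$ is a union of at most $|\mathcal A_{n_j}|\le2^{2^{n_j}}$ such cells. It therefore has measure at most $2^{2^{n_j}}/|S_j|$. This also holds when $n_j=0$, since $|\mathcal A_0|=1\le 2^{2^0}$. Summing over $j$, the set of points bad at some level has measure at most $\sum_j2^{2^{n_j}}/|S_j|<1$. Hence some $x\in S$ is good at every level, and that point has the property required above.

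The argument has no real obstacle; the only points needing care are the following. First, the restriction to $S$ must preserve admissibility, which it does because the cell counts only decrease. Second, distinctness of the $n_j$ is needed so that each scale is paid separately in the chaining sum. Third, the key step is the separation argument showing that a small cell cannot distinguish $j$th coordinates. This is exactly where the hypothesis $d_j(\pi_jx,\pi_jy)\le d(x,y)$ and the strict inequality in the definition of separation are used.
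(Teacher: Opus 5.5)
Your proof is correct and follows essentially the paper's argument. Your uniform-measure union bound is exactly the paper's count $|B_j|\le 2^{2^{n_j}}|S|/|S_j|$, and it rests on the same observation that a cell of diameter below $\varepsilon_j$ fixes the $j$th coordinate on $S$; the preliminary restriction of the partitions to $S$ is harmless but not needed.
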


\begin{proof}
Fix an admissible sequence $(\mathcal A_n)$ of partitions of $K$ and set
\[
 B_j=\{x\in S:\diam(\mathcal A_{n_j}(x))<\varepsilon_j\}.
\]
If a cell
$A\in\mathcal A_{n_j}$ has diameter less than $\varepsilon_j$, then all
points of $A\cap S$ have the same $j$th coordinate.  Hence
$|A\cap S|\le |S|/|S_j|$, and admissibility gives
$|B_j|\le2^{2^{n_j}}|S|/|S_j|$.  The assumed sum is less than one, so
some $x\in S\setminus\bigcup_jB_j$ exists; then
$\diam(\mathcal A_{n_j}(x))\ge\varepsilon_j$ for every $j$.  Since the
levels $n_j$ are distinct,
\[
 \sum_{n\ge0}2^{n/2}\diam\bigl(\mathcal A_n(x)\bigr)
 \ge\sum_{j=1}^m2^{n_j/2}\varepsilon_j.
\]
Taking the infimum over admissible sequences proves the result.
\end{proof}

\section{The mixed-norm construction}\label{sec:construction}

For every nonempty finite metric space $A$,
\[
 \gamma_2(A)\lesssim\diam(A)\sqrt{\log(1+|A|)}.
\]
Indeed, retain the one-cell partition until the first admissible level at
which the singleton partition is allowed, and use singletons thereafter.

\begin{proof}[Proof of Theorem~\ref{thm:main}]
The set $T_{p,r}$ lies in $S_{Y_{p,r}}$ and has cardinality
$D^r=2^{128r^2}$.  Its diameter is at most $2$, so the finite-set bound
gives $\gamma_2(T_{p,r})\lesssim r$.

Let $\Delta_D=\conv\{e_1,\ldots,e_D\}\subset\ell_2^D$.  Then
\begin{equation}\label{eq:conv-product}
 \conv T_{p,r}=r^{-1/p}(\Delta_D)^r.
\end{equation}
The forward inclusion is immediate.  For the reverse inclusion, if $x^{(i)}=\sum_s\lambda_{i,s}e_s\in\Delta_D$, then
\[
 r^{-1/p}(x^{(1)},\ldots,x^{(r)})
 =\sum_{s_1,\ldots,s_r}
 \left(\prod_{i=1}^r\lambda_{i,s_i}\right)
 r^{-1/p}\sum_{i=1}^re_{i,s_i},
\]
whose coefficients sum to one.

For $1\le i\le r$, set $k_i=2^{8i}$.  Since
$256\le k_i\le2^{8r}\le D/4$, choose a maximal family
\[
 \mathcal F_i\subseteq\{A\subseteq[D]:|A|=k_i\}
\]
whose distinct members satisfy $|A\triangle B|\ge k_i$.  The standard
packing--covering count in the constant-weight Hamming space gives
\[
 \binom D{k_i}
 \le |\mathcal F_i|
 \sum_{0\le s<k_i/2}\binom{k_i}{s}\binom{D-k_i}{s}.
\]
The sum on the right is at most
\[
 k_i2^{k_i}\left(\frac{2eD}{k_i}\right)^{k_i/2}
 \le\left(\frac{32eD}{k_i}\right)^{k_i/2},
\]
whereas $\binom D{k_i}\ge(D/k_i)^{k_i}$.  Hence
\[
 |\mathcal F_i|\ge\left(\frac{D}{32e\,k_i}\right)^{k_i/2}.
\]
Inside the $i$th scaled simplex $r^{-1/p}\Delta_D$, define
\[
 S_i=\left\{\frac{r^{-1/p}}{k_i}\sum_{s\in A}e_s:
 A\in\mathcal F_i\right\}.
\]
For distinct $A,B\in\mathcal F_i$,
\[
 \left\|\frac{r^{-1/p}}{k_i}\sum_{s\in A}e_s
 -\frac{r^{-1/p}}{k_i}\sum_{s\in B}e_s\right\|_2
 =\frac{r^{-1/p}}{k_i}|A\triangle B|^{1/2}
 \ge r^{-1/p}k_i^{-1/2}=: \varepsilon_i.
\]
Moreover, since $\log_2(32e)<7$ and $i\le r$,
\[
 \log_2|S_i|
 \ge\frac{k_i}{2}\bigl(128r-8i-\log_2(32e)\bigr)
 \ge50rk_i.
\]

Put $n_i=\lfloor\log_2(4rk_i)\rfloor$.  Then
$n_{i+1}=n_i+8$ and $2rk_i<2^{n_i}\le4rk_i$, so
\[
 \frac{2^{2^{n_i}}}{|S_i|}\le2^{-46rk_i},\qquad
 \sum_{i=1}^r\frac{2^{2^{n_i}}}{|S_i|}
 \le r\,2^{-46rk_1}<1.
\]
By \eqref{eq:conv-product}, the full product
$S_1\times\cdots\times S_r$ lies in $\conv T_{p,r}$.  With the
Euclidean metric on each $S_i$, the coordinate projections are
nonexpansive for the $\ell_p$-sum norm.  Proposition~\ref{prop:product}
therefore gives
\[
 \gamma_2(\conv T_{p,r})
 \ge\sum_{i=1}^r2^{n_i/2}\varepsilon_i
 >\sum_{i=1}^r(2rk_i)^{1/2}r^{-1/p}k_i^{-1/2}
 =\sqrt2\,r^{3/2-1/p}.
\]
\end{proof}

Finally, if $H=\ell_2^r(\ell_2^D)$, then
$\|x\|_{Y_{p,r}}\le\|x\|_H\le
r^{1/2-1/p}\|x\|_{Y_{p,r}}$.  The Hilbert convex-hull inequality gives
$\Cprof_{Y_{p,r}}\lesssim r^{1/2-1/p}$, while
Theorem~\ref{thm:main} gives the reverse bound.  Hence
$\Cprof_{Y_{p,r}}\asymp r^{1/2-1/p}$.

\section{Transfer and consequences}\label{sec:consequences}

A Banach space $Z$ is finitely representable in $X$ if every
finite-dimensional subspace of $Z$ embeds into $X$ with distortion
arbitrarily close to $1$.  Finite representability is transitive.

\begin{lemma}\label{lem:transfer}
Let $T\subset Z$ be finite, and suppose that a linear map
$U:\operatorname{span}T\to X$ satisfies
$m\|z\|\le\|Uz\|\le M\|z\|$ for some $m,M>0$.  Put
$S=M^{-1}UT$.  Then $|S|=|T|$ and
\[
 \gamma_2(S;X)\le\gamma_2(T;Z),\qquad
 \gamma_2(\conv S;X)\ge\frac mM\gamma_2(\conv T;Z).
\]
If $T\subseteq B_Z$, then $S\subseteq B_X$.
\end{lemma}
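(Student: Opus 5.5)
The plan is to compare metrics directly and transport admissible sequences along the maps $M^{-1}U$ and its inverse. Since $U$ satisfies $\|Uz\|\ge m\|z\|>0$ for $z\ne0$, it is injective on $\operatorname{span}T$. Hence $\phi=M^{-1}U$ is a bijection from $T$ onto $S$, which gives $|S|=|T|$. Moreover, $\phi$ is linear and therefore maps $\conv T$ bijectively onto $\conv S=\phi(\conv T)$. For $x,y\in\operatorname{span}T$ we have
\[
 \frac mM\|x-y\|_Z\le\|\phi x-\phi y\|_X\le\|x-y\|_Z .
\]
If $T\subseteq B_Z$, then $\|\phi t\|\le\|t\|\le1$, so $S\subseteq B_X$.

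For the upper bound on $\gamma_2(S;X)$, take any admissible sequence $(\mathcal A_n)$ on $T$ and push it forward to $\{\phi(A):A\in\mathcal A_n\}$. This is again a refining sequence of partitions of $S$ with the same cardinalities, because $\phi$ is a bijection. Since $\phi$ is $1$-Lipschitz, $\diam\phi(A)\le\diam A$, and therefore
\[
 \sup_{s\in S}\sum_n2^{n/2}\diam\mathcal A_n'(s)\le\sup_{t\in T}\sum_n2^{n/2}\diam\mathcal A_n(t).
\]
Taking the infimum over admissible sequences gives $\gamma_2(S;X)\le\gamma_2(T;Z)$.

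For the lower bound, I will run the same argument in reverse on the convex hulls. Given an admissible sequence on $\conv S$, pull it back through $\phi^{-1}$ to an admissible sequence on $\conv T$. Since $\|x-y\|_Z\le(M/m)\|\phi x-\phi y\|_X$, each pulled-back cell has diameter at most $(M/m)$ times the diameter of the original cell. Hence $\gamma_2(\conv T;Z)\le(M/m)\gamma_2(\conv S;X)$, which rearranges to the stated inequality.

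The argument has no real obstacle. The only points that need checking are that $\phi$ is injective, so that the partitions transfer with unchanged cardinalities, and that linearity gives $\phi(\conv T)=\conv S$.
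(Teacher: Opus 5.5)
Your proposal is correct and takes the same approach as the paper. The paper's proof is the one line ``metric comparison and $U(\conv T)=\conv(UT)$''. You have written out the push-forward and pull-back of admissible sequences that this line leaves implicit, and you correctly use injectivity (from $m>0$) so that cell counts are preserved in both directions.
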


\begin{proof}
The assertions follow from metric comparison and
$U(\conv T)=\conv(UT)$.
\end{proof}

Consequently, if $Z$ is finitely representable in $X$, then
$\Cprof_X(N)\ge\Cprof_Z(N)$ for every $N\ge2$; the same argument
transfers unit-ball witnesses for the entropy-normalized estimate.

Fix $2<p<\infty$.  Let $G_1,\ldots,G_D$ be independent standard real
Gaussians and put $c_p=(\mathbb E|G_1|^p)^{1/p}$.  Partition $[0,1]$
into intervals $I_1,\ldots,I_r$ of positive length and, on each $I_i$
with normalized Lebesgue measure, realize independent standard Gaussians
$g_{i,1},\ldots,g_{i,D}$.  Define
\[
 (\Phi x)(t)=\frac{|I_i|^{-1/p}}{c_p}
 \sum_{s=1}^D x_s^{(i)}g_{i,s}(t),\qquad t\in I_i.
\]
Rotational invariance gives
\[
 \|\Phi x\|_p^p
 =\sum_{i=1}^r c_p^{-p}\mathbb E
   \left|\sum_{s=1}^D x_s^{(i)}G_s\right|^p
 =\sum_{i=1}^r\|x^{(i)}\|_2^p.
\]
Thus $\Phi:Y_{p,r}\to L^p[0,1]$ is a linear isometry.

Given a finite-dimensional $E\subset L^p[0,1]$ and $\eta>0$, approximate
a basis of $E$ sufficiently closely by simple functions on a common
finite partition.  The resulting map has distortion at most $1+\eta$,
and its range is isometric to a subspace of a weighted $\ell_p^M$,
which is isometric to $\ell_p^M$.  Thus $L^p[0,1]$ is finitely representable in $\ell_p$.

Let $S_r=\Phi(T_{p,r})$.  Then $S_r\subseteq S_{L^p}$ and
\[
 \min\left\{
 \frac{\gamma_2(\conv S_r)}{\sqrt{\log|S_r|}},
 \frac{\gamma_2(\conv S_r)}{\gamma_2(S_r)}
 \right\}
 \gtrsim r^{1/2-1/p}\longrightarrow\infty.
\]
Applying Lemma~\ref{lem:transfer} to embeddings into $\ell_p$ gives
unit-ball sets with the same divergent lower bound.  Pinelis' inequality
$\|x+y\|^2+\|x-y\|^2\le2\|x\|^2+2(p-1)\|y\|^2$ shows that both
$L^p[0,1]$ and $\ell_p$ are $2$-smooth
\cite[Proposition~2.1]{Pinelis1994}.  Hence both estimates in Research
Problem~2.11.3 fail in each fixed space.

\begin{proof}[Proof of Corollary~\ref{cor:cotype}]
If $2<q_X<\infty$, the Maurey--Pisier endpoint theorem gives finite
representability of $\ell_{q_X}$ in $X$
\cite[Remark~0.2 and Theorem~1.1, pp.~50, 54--55]{MaureyPisier1976}.
The preceding Gaussian and discretization arguments show that each
$Y_{q_X,r}$ is finitely representable in $\ell_{q_X}$, hence in $X$.

If $q_X=\infty$, the theorem of Bastero--Uriz gives finite
representability of $c_0$ in $X$ \cite[Theorem~1]{BasteroUriz1986}.
A finite norming set embeds every finite-dimensional Banach space with
distortion arbitrarily close to $1$ into some $\ell_\infty^M$, and
$\ell_\infty^M$ embeds isometrically into $c_0$.  Hence each $Y_{\infty,r}$ is finitely
representable in $X$.

Apply Lemma~\ref{lem:transfer} to the corresponding model set and take
$r=\lfloor(\log_2N/128)^{1/2}\rfloor$.  For sufficiently large $N$,
$|T_{p,r}|\le N$ and $r\asymp\sqrt{\log N}$, while both model ratios are
$\gtrsim r^{1/2-1/p}$, with $p=q_X$ in the first case and $p=\infty$ in
the second.  The stated estimate follows from $1/\infty=0$.
\end{proof}

\paragraph{The cotype-$2$ endpoint.}
Talagrand also asked in private correspondence whether bounded
convexification profile characterizes Hilbert space.  In the $2$-smooth
class, Corollary~\ref{cor:cotype} shows that $\Cprof_X<\infty$ forces
$q_X=2$.  Since $2$-smoothness implies Rademacher type $2$, Kwapie\'n's
theorem yields Hilbertian structure if $X$ also has cotype $2$
\cite[Corollary~3.2]{Kwapien1972}.  The remaining issue is that equality
of the cotype index need not assert that cotype $2$ is attained.

\section*{Acknowledgments}
The authors thank Michel Talagrand for his encouragement, for pointing out
the extension of the initial example to $L^p$, for suggesting the
Hilbert-rigidity question mentioned above, and for comments that led to a
substantial simplification of the manuscript.

\paragraph{Use of generative AI.}
The initial counterexample and its core product-of-simplices construction
were found by the authors.  Generative AI tools were used in the subsequent
development of the manuscript, including exploratory discussion of extensions
and consequences, checking intermediate arguments, and language and \LaTeX{}
editing.  All outputs were independently assessed and verified; the authors
take full responsibility for the paper.

\end{document}